\def\titlerunning#1{\gdef\titrun{#1}}
\def\author#1{\gdef\autrun{\def\and{\unskip, }#1}\gdef\@author{#1}}
\def\address#1{{\def\and{\\\hspace*{18pt}}\renewcommand{\thefootnote}{}%
\footnote {#1}}%
\markboth{\autrun}{\titrun}}
\def\email#1{e-mail: #1}
\def\subjclass#1{{\renewcommand{\thefootnote}{}%
\footnote{\emph{Mathematics Subject Classification (2010):} #1}}}
\def\keywords#1{\par\medskip
\noindent\textbf{Keywords.} #1}
\newtheorem{thm}{Theorem}[section]
\newtheorem{cor}[thm]{Corollary}
\newtheorem{lem}[thm]{Lemma}
\newtheorem{prop}[thm]{Proposition}
\theoremstyle{definition}
\numberwithin{equation}{section}
\def \C {\mathbb{C}}
\def \de {\delta}
\def \De {\Delta}
\def \la {\lambda}
\def \La {\Lambda}
\def\w {\omega}
\def\Om{\Omega}
\def\pa{\partial}
\def\na {\nabla}
\begin{document}

\baselineskip=17pt

\titlerunning{An $L^{2}$-isolation theorem for Yang-Mills fields on K\"{a}hler surfaces}

\title{An $L^{2}$-isolation theorem for Yang-Mills fields on K\"{a}hler surfaces}

\author{Teng Huang}

\date{}

\maketitle

\address{T. Huang: Department of Mathematics, University of Science and Technology of China,
                   Hefei, Anhui 230026, PR China; \email{oula143@mail.ustc.edu.cn}}

\subjclass{53C07; 58E15}

\begin{abstract}
We prove an $L^{2}$ energy gap result for Yang-Mills connections on principal $G$-bundles over compact K\"{a}hler surfaces with positive scalar curvature.\ We prove related results for compact simply-connected Calabi-Yau $2$-folds.

\keywords{Yang-Mills connection; anti-self-dual connection; K\"{a}hler surface}

\end{abstract}

\section{Introduction}

A connection on a principal bundle is called Yang-Mills connection when it gives a critical point of the Yang-Mills functional,\ that is,\ it satisfies the Yang-Mills equation
$$d^{\ast}_{A}F_{A}=0.$$
From the Bianchi identity $d_{A}F_{A}=0$,\ a Yang-Mills connection is nothing but a connection whose curvature is harmonic with respect to the covariant exterior derivative $d_{A}$.\ In this article the principal bundle $P$ is always smooth.

Over a $4$-dimensional Riemannnian manifold,\ $F_{A}$ is decomposed into its self-dual and anti-self-dual components,
$$ F_{A}=F^{+}_{A}+F^{-}_{A}$$
where $F^{\pm}_{A}$ denotes the projection onto the $\pm1$ eigenspace of the Hodge star operator.\ A connection is called self-dual (respectively anti-self-dual) if $F_{A}=F^{+}_{A}$ (respectively $F_{A}=F^{-}_{A}$).\ A connection is called an instanton if it is either self-dual or anti-self-dual.\ On compact oriented $4$-manifolds,\ an instanton is always an absolute minimizer of the Yang-Mills energy.\ Not all Yang-Mills connections are instantons.\ See \cite{SS,SSU} for example the $SU(2)$ Yang-Mills connection on $S^{4}$ which are neither self-dual nor anti-self-dual.

It is a very interesting problem for us to study when Yang-Mills connection is anti-self-dual (or self-dual).

In \cite{BL},\ Bourguignon-Lawson proved that Yang-Mills connection on $P\rightarrow X$ a principal $SU(N)$-bundle,\ where $X$ is a four-dimensional anti-self-dual compact Riemannian manifold with positive scalar curvature $S$ must be anti-self-dual,\ provided $|F_{A}^{+}|\leq\frac{S}{12}.$\ It is extended to more general cases in \cite{YZ1,YZ2,Zh}.

Further,\ Min-Oo \cite{Mi} proved that the Yang-Mills connection on $P\rightarrow X$  a principal $SU(2)$-bundle must be anti-self-dual,\ whenever
$\int_{S^{4}}|F^{+}_{A}|^{2}\leq C$,\ where $C$ is suitable positive constant.

Now we assume that the base manifold is a K\"{a}hler surface.\ Then the curvature splits into
$$F_{A}=F^{2,0}_{A}+F^{1,1}_{A}+F^{0,2}_{A},$$
where $F^{p,q}_{A}$ is the $(p,q)$-component.\ We have from the Bianchi identity $\pa_{A}F^{2,0}_{A}=0$ and $\bar{\pa}_{A}F^{0,2}_{A}=0$ with respect to the partial covariant derivatives.\ An anti-self-dual connection relates to a semi-stable holomorphic structure together with an Einstein-Hermitian structure on the associated complex vector bundle \cite{Do}.

The self-dual part $F^{+}_{A}$ is given as $F^{+}_{A}=F^{2,0}_{A}+F^{0,2}_{A}+\frac{1}{2}(\La_{\w}F_{A})\otimes\w$ and the anti-self-dual part $F^{-}_{A}$ is a form of type $(1,1)$ which is orthogonal to $\w$ \cite{DK,Ko}.

Denote by $\mathcal{A}_{YM}$ the space of Yang-Mills connections and $\mathcal{A}_{HYM}$ the space of connections whose curvature satisfies $\hat{F}_{A}=\la Id_{E}$,\ here $\la$ is a constant.\ There spaces are gauge invariant with respect to the group $\mathcal{G}$ of gauge transformations.

The following gives an isolation phenomenon relative to $L^{2}$-norm of $\hat{F}_{A}$ .
\begin{thm}
Let $M$ be a compact K\"{a}hler surface with positive scalar curvature and $A$ be a Yang-Mills connection on a bundle $E$ over $M$ with compact,\ semi-simple Lie group $G$.\ There exist a positive constant $\de=\de(M,E,A)$ with the following significance.\ If $\hat{F}_{A}$ satasfies
$$\|\hat{F}_{A}-\la Id_{E}\|_{L^{2}(M)}\leq\de,$$
where $\la=\frac{2(C_{1}(E)\cdot[w])}{rank(E)[w]^{2}}$,\ then
$$F^{0,2}_{A}=0\quad and \quad \hat{F}_{A}=\la Id_{E}$$
\end{thm}
We have then from this theorem an open subset $$W=\{[A]:\|\hat{F}_{A}-\la Id_{E}\|_{L^{2}(X)}\leq\de\}$$ in the orbit space $\mathcal{A}/\mathcal{G}$ of connections with property $\mathcal{A}_{HYM}/\mathcal{G}=W\cap\mathcal{A}_{YM}/\mathcal{G}$.

In the case that the scalar curvature $S=0$,\ we consider irreducible Yang-Mills connections on compact Calabi-Yau $2$-folds.

A connection is irreducible when it admits nontrivial covariantly constant Lie algebra-valued $0$-form.\ Denote by $\hat{\mathcal{A}}_{YM}$  the space of irreducible Yang-Mills connections and $\hat{\mathcal{A}_{0}}$ the space of irreducible connections whose curvature satisfies $\hat{F}_{A}=0$.\ Then the theorem asserts that $\hat{\mathcal{A}}_{YM}\cap\hat{\mathcal{A}}_{0}=\hat{\mathcal{A}}_{ASD}$,\ here $\hat{\mathcal{A}}_{ASD}$ is the space of irreducible anti-self-dual connections.\ There spaces are gauge invariant with respect to the group $\mathcal{G}$ of gauge transformations.\ So the moduli space of irreducible ansi-self-dual connections $\hat{\mathcal{A}}_{ASD}/\mathcal{G}=\hat{\mathcal{A}}_{YM}/\mathcal{G}\cap\hat{\mathcal{A}}_{0}/\mathcal{G}$.

The following gives an isolation phenomenon relative to $L^{2}$-norm of $\hat{F}_{A}$ over compact Calabi-Yau $2$-fold.
\begin{thm}
Let $M$ be a compact simply-connected Calabi-Yau $2$-fold and $A$ be a irreducible Yang-Mills connection on a bundle $E$ over $M$ with compact,\ semi-simple Lie group $G$.\ There exist a positive constant $\de=\de(M,A)$ with the following significance.\ If $\hat{F}_{A}$ satisfies
$$\|\hat{F}_{A}\|_{L^{2}(M)}\leq\de,$$
then $A$ is anti-self-dual connection.
\end{thm}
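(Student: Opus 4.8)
The plan is not to use the self-dual Weitzenb\"ock formula directly (it is not enough here, since $\|F^{+}_{A}\|_{L^{2}}$ is not controlled by $\|\hat{F}_{A}\|_{L^{2}}$), but to exploit the Calabi--Yau structure so as to handle $F^{0,2}_{A}$ on its own, closing the estimate by a Poincar\'e inequality whose constant is finite precisely because $A$ is irreducible. First I would record two facts about any Yang--Mills connection $A$ on a compact K\"ahler surface. From $d_{A}F_{A}=0$, $d^{\ast}_{A}F_{A}=0$ and $\ast F^{\pm}_{A}=\pm F^{\pm}_{A}$ one gets $d_{A}F^{\pm}_{A}=d^{\ast}_{A}F^{\pm}_{A}=0$, so $F^{+}_{A}$ and $F^{-}_{A}$ are separately $d_{A}$-harmonic; in particular $\pa^{\ast}_{A}F^{-}_{A}=0$, where $F^{-}_{A}$ is the primitive $(1,1)$-part of $F_{A}$. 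Decomposing $d^{\ast}_{A}F_{A}=0$ into bidegrees, writing $F^{1,1}_{A}=F^{-}_{A}+\tfrac12\hat{F}_{A}\,\w$, and using the K\"ahler identities (which convert $\pa^{\ast}_{A}(\hat{F}_{A}\,\w)$ into a multiple of $\bar{\pa}_{A}\hat{F}_{A}$) together with $\pa^{\ast}_{A}F^{-}_{A}=0$, one obtains
\[
\bar{\pa}^{\ast}_{A}F^{0,2}_{A}=c_{0}\,\bar{\pa}_{A}\hat{F}_{A},\qquad c_{0}\neq0,
\]
a fixed nonzero constant. (On a surface $\bar{\pa}_{A}F^{0,2}_{A}=0$ holds trivially, so this identity is a genuine consequence of the Yang--Mills equation.)

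Since $M$ is a simply-connected Calabi--Yau surface it carries a parallel, nowhere-vanishing holomorphic $(2,0)$-form $\Om$, which I normalize so that $|\Om|\equiv1$; then $\Lambda^{0,2}$ is a flat trivial line bundle, trivialized by $\ov{\Om}$. Writing $F^{0,2}_{A}=\psi\,\ov{\Om}$ identifies $F^{0,2}_{A}$ with a section $\psi$ of $\operatorname{ad}(E)\otimes\C$, and $\Delta_{\bar{\pa}_{A}}$ on $\operatorname{ad}(E)$-valued $(0,2)$-forms with $\bar{\pa}^{\ast}_{A}\bar{\pa}_{A}$ on sections. Applying $\bar{\pa}_{A}$ to the identity above (and using $\bar{\pa}_{A}F^{0,2}_{A}=0$) gives $\Delta_{\bar{\pa}_{A}}F^{0,2}_{A}=c_{0}\,\bar{\pa}_{A}^{2}\hat{F}_{A}=c_{0}[F^{0,2}_{A},\hat{F}_{A}]$, hence $\bar{\pa}^{\ast}_{A}\bar{\pa}_{A}\psi=c_{0}[\psi,\hat{F}_{A}]$; combining with the Bochner--Kodaira--Nakano identity $\bar{\pa}^{\ast}_{A}\bar{\pa}_{A}\psi-\pa^{\ast}_{A}\pa_{A}\psi=\kappa[\psi,\hat{F}_{A}]$ on sections (with $\kappa$ a nonzero constant) gives an identity $\pa^{\ast}_{A}\pa_{A}\psi=\tau[\psi,\hat{F}_{A}]$, $\tau=c_{0}-\kappa$. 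Pairing with $\psi$, integrating by parts, and using the pointwise identity $|\na_{A}\psi|^{2}=2|\pa_{A}\psi|^{2}$ for sections on a K\"ahler manifold together with H\"older's inequality, I obtain
\[
\|\na_{A}\psi\|_{L^{2}(M)}^{2}\ \le\ C(\mathfrak{g})\,\|\hat{F}_{A}\|_{L^{2}(M)}\,\|\psi\|_{L^{4}(M)}^{2}.
\]

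Now the Sobolev embedding $W^{1,2}(M)\hookrightarrow L^{4}(M)$ and the Kato inequality give $\|\psi\|_{L^{4}}^{2}\le 2C_{S}^{2}\bigl(\|\na_{A}\psi\|_{L^{2}}^{2}+\|\psi\|_{L^{2}}^{2}\bigr)$, and since $A$ is irreducible, $\na_{A}$ admits no nonzero parallel section, so the lowest eigenvalue $\mu_{1}=\mu_{1}(M,A)$ of $\na^{\ast}_{A}\na_{A}$ on $\Omega^{0}(\operatorname{ad}(E))$ is positive and $\|\psi\|_{L^{2}}^{2}\le\mu_{1}^{-1}\|\na_{A}\psi\|_{L^{2}}^{2}$. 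Substituting gives $\|\na_{A}\psi\|_{L^{2}}^{2}\le K(M,A)\,\|\hat{F}_{A}\|_{L^{2}}\,\|\na_{A}\psi\|_{L^{2}}^{2}$, so if $\de=\de(M,A):=\tfrac12 K(M,A)^{-1}$ then $\|\hat{F}_{A}\|_{L^{2}}\le\de$ forces $\na_{A}\psi=0$, i.e. $\psi$ parallel, hence $\psi=0$; thus $F^{0,2}_{A}=0$ and $F^{2,0}_{A}=\ov{F^{0,2}_{A}}=0$. Feeding $F^{0,2}_{A}=0$ back into the first identity gives $\bar{\pa}_{A}\hat{F}_{A}=0$, hence also $\pa_{A}\hat{F}_{A}=\ov{\bar{\pa}_{A}\hat{F}_{A}}=0$, so $\hat{F}_{A}$ is covariantly constant and therefore vanishes by irreducibility. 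Then $F^{+}_{A}=F^{2,0}_{A}+F^{0,2}_{A}+\tfrac12\hat{F}_{A}\otimes\w=0$, i.e. $A$ is anti-self-dual.

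I expect the main obstacle to be the first step: deriving $\bar{\pa}^{\ast}_{A}F^{0,2}_{A}=c_{0}\,\bar{\pa}_{A}\hat{F}_{A}$ with all signs and normalizations correct — the K\"ahler identities, the primitive/trace splitting of $F^{1,1}_{A}$, and the identity $\pa^{\ast}_{A}F^{-}_{A}=0$ all have to be tracked carefully — and, relatedly, checking that $\de$ depends only on the Sobolev constant of $M$, the structure constants of $\mathfrak{g}$, and the spectral gap $\mu_{1}$ of $\na^{\ast}_{A}\na_{A}$ on $\operatorname{ad}(E)$. The last of these is where the hypothesis that $A$ is irreducible enters, and is precisely why a bound on $\|\hat{F}_{A}\|_{L^{2}}$ alone — rather than on the entire self-dual component $F^{+}_{A}$ — is sufficient here (in contrast with the positive-scalar-curvature case, where the Weitzenb\"ock term $\tfrac{S}{3}|F^{+}_{A}|^{2}$ does the analogous job).
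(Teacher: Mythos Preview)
Your approach is essentially the paper's: identify $F^{0,2}_{A}$ with a section $\psi$ via the parallel holomorphic volume form, combine the Yang--Mills identity $\bar{\pa}^{\ast}_{A}F^{0,2}_{A}=c_{0}\bar{\pa}_{A}\hat{F}_{A}$ (the paper's Proposition~2.1) with the Bochner--Kodaira--Nakano relation on sections (the paper cites this as Donaldson--Kronheimer Lemma~6.1.7), and close with Sobolev, Kato, and the irreducibility Poincar\'e inequality. One slip: the pointwise identity $|\na_{A}\psi|^{2}=2|\pa_{A}\psi|^{2}$ holds only for \emph{real} sections, not for your complex $\psi$; but since you already have both $\bar{\pa}^{\ast}_{A}\bar{\pa}_{A}\psi$ and $\pa^{\ast}_{A}\pa_{A}\psi$ equal to multiples of $[\psi,\hat{F}_{A}]$, just add them to obtain $\na^{\ast}_{A}\na_{A}\psi=(c_{0}+\tau)[\psi,\hat{F}_{A}]$ directly, and the rest of the argument goes through unchanged.
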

\section{Yang-Mills connection over K\"{a}hler manifold}

Let $M$ be a K\"{a}hler manifold with K\"{a}hler metric $g$ and $P\rightarrow M$ be a smooth principal bundle over $M$ with a compact semi-simple Lie group $G$.\ For any connection $A$ on $P$ we have the covariant exterior derivatives $d_{A}:\Om^{k}\rightarrow\Om^{k+1}(\mathfrak{g}_{P})$ where $\Om^{k}(\mathfrak{g}_{P})$ denotes the space of Lie-algebra value $k$-forms.\ Like the canonical splitting the exterior derivatives $d=\pa+\bar{\pa}$,\ decomposes over $M$ into $d_{A}=\pa_{A}+\bar{\pa}_{A}$.\ The curvature splits into $F_{A}=F^{2,0}_{A}+F_{A}^{1,1}+F^{0,2}_{A}$,\ where $F^{p,q}_{A}$ is the $(p,q)$-component.\ Then we get from the Bianchi identity
\begin{equation}\nonumber
\begin{split}
&\pa_{A}F^{2,0}_{A}=\bar{\pa}_{A}F^{0,2}_{A}=0\\
&\bar{\pa}_{A}F^{2,0}_{A}+\pa_{A}F^{1,1}_{A}=\pa_{A}F^{0,2}_{A}+\bar{\pa}_{A}F^{1,1}_{A}=0.\\
\end{split}
\end{equation}
Decompose the curvature,\ $F_{A}$,\ as
$$F_{A}=F^{2,0}_{A}+F^{1,1}_{A0}+\frac{1}{n}\hat{F}_{A}\otimes\w+F^{0,2}_{A},$$
where $\hat{F}_{A}:=\La_{\w}F_{A}$ and $F^{1,1}_{A0}=F^{1,1}_{A}-\frac{1}{n}\hat{F}_{A}\otimes\w$.
\begin{prop}\label{1}
Let $A$ be a Yang-Mills connection on a bundle $E$ over a K\"{a}hler $n$-fold,\ then
\begin{equation}\label{30}
(1)\ 2\bar{\pa}^{\ast}_{A}F^{0,2}_{A}=\sqrt{-1}\bar{\pa}_{A}\hat{F}_{A},
\end{equation}
\begin{equation}
(2)\ 2\pa_{A}^{\ast}F^{2,0}_{A}=-\sqrt{-1}\pa_{A}\hat{F}_{A}.
\end{equation}
\end{prop}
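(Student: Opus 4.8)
The plan is to obtain both identities formally, by combining three ingredients: the $(p,q)$-type decomposition of the Yang-Mills equation, the Bianchi identities recalled above, and the K\"{a}hler identities for the connection $A$. Since $G$ is compact and semi-simple, the adjoint bundle $\mathfrak{g}_{P}$ carries a metric (from the Killing form) for which $A$ is a metric connection, so on $\Om^{p,q}(\mathfrak{g}_{P})$ one has the K\"{a}hler identities $[\La_{\w},\bar{\pa}_{A}]=-\sqrt{-1}\,\pa^{\ast}_{A}$ and $[\La_{\w},\pa_{A}]=\sqrt{-1}\,\bar{\pa}^{\ast}_{A}$. First I would write $d^{\ast}_{A}=\pa^{\ast}_{A}+\bar{\pa}^{\ast}_{A}$ and observe that $\bar{\pa}^{\ast}_{A}F^{2,0}_{A}=0$ and $\pa^{\ast}_{A}F^{0,2}_{A}=0$ for type reasons (they would be forms of type $(2,-1)$, resp.\ $(-1,2)$). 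Hence the Yang-Mills equation $d^{\ast}_{A}F_{A}=0$, separated into its $(1,0)$- and $(0,1)$-parts, becomes
$$\pa^{\ast}_{A}F^{2,0}_{A}+\bar{\pa}^{\ast}_{A}F^{1,1}_{A}=0\qquad\text{and}\qquad\pa^{\ast}_{A}F^{1,1}_{A}+\bar{\pa}^{\ast}_{A}F^{0,2}_{A}=0.$$

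For (1) I would start from $F^{0,2}_{A}$ and apply $[\La_{\w},\pa_{A}]=\sqrt{-1}\,\bar{\pa}^{\ast}_{A}$ to it, using $\La_{\w}F^{0,2}_{A}=0$, to get $\bar{\pa}^{\ast}_{A}F^{0,2}_{A}=-\sqrt{-1}\,\La_{\w}\pa_{A}F^{0,2}_{A}$; then replace $\pa_{A}F^{0,2}_{A}=-\bar{\pa}_{A}F^{1,1}_{A}$ by the Bianchi identity; then apply $[\La_{\w},\bar{\pa}_{A}]=-\sqrt{-1}\,\pa^{\ast}_{A}$ to $F^{1,1}_{A}$ and use $\La_{\w}F^{1,1}_{A}=\hat{F}_{A}$, which yields $\bar{\pa}^{\ast}_{A}F^{0,2}_{A}=\sqrt{-1}\,\bar{\pa}_{A}\hat{F}_{A}+\pa^{\ast}_{A}F^{1,1}_{A}$. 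Substituting the second Yang-Mills relation $\pa^{\ast}_{A}F^{1,1}_{A}=-\bar{\pa}^{\ast}_{A}F^{0,2}_{A}$ then gives $2\bar{\pa}^{\ast}_{A}F^{0,2}_{A}=\sqrt{-1}\,\bar{\pa}_{A}\hat{F}_{A}$. Part (2) is the mirror computation: one starts from $F^{2,0}_{A}$, uses $[\La_{\w},\bar{\pa}_{A}]=-\sqrt{-1}\,\pa^{\ast}_{A}$ with $\La_{\w}F^{2,0}_{A}=0$, applies Bianchi in the form $\bar{\pa}_{A}F^{2,0}_{A}=-\pa_{A}F^{1,1}_{A}$, then $[\La_{\w},\pa_{A}]=\sqrt{-1}\,\bar{\pa}^{\ast}_{A}$ on $F^{1,1}_{A}$, and closes with the first Yang-Mills relation; equivalently, it is the formal complex conjugate of (1).

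The argument is short, and I expect no conceptual obstacle. The only points requiring care are bookkeeping ones: fixing once and for all the sign conventions in the K\"{a}hler identities so that the factors of $\sqrt{-1}$ and the constant $2$ come out exactly as in the statement, and keeping track of which of $\pa^{\ast}_{A}$, $\bar{\pa}^{\ast}_{A}$ annihilates a given form of pure type.
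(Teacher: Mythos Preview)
Your argument is correct and uses the same ingredients as the paper's proof: the $(1,2)$-part of the Bianchi identity, the $(0,1)$-part of the Yang-Mills equation, and the K\"{a}hler identities $[\La_{\w},\bar{\pa}_{A}]=-\sqrt{-1}\pa^{\ast}_{A}$, $[\La_{\w},\pa_{A}]=\sqrt{-1}\bar{\pa}^{\ast}_{A}$. The only cosmetic difference is that the paper first splits $F^{1,1}_{A}=F^{1,1}_{A0}+\tfrac{1}{n}\hat{F}_{A}\otimes\w$ and then applies $\sqrt{-1}\La_{\w}$ to the Bianchi relation and the K\"{a}hler identities to the Yang-Mills relation before subtracting the two, whereas you chain the K\"{a}hler identities directly without isolating the trace-free part; this is a reordering of the same computation, not a different method.
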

\begin{proof}
We only prove the first identity,\ the second's proof is similar.\ Recall that the Bianchi equations $d_{A}F_{A}=0$,\ we take $(1,2)$ part,\ it implies that
\begin{equation}\nonumber
0=\bar{\pa}_{A}F^{1,1}_{A0}+\frac{1}{n}\bar{\pa}_{A}(\hat{F}_{A}\otimes\w)+\pa_{A}F^{0,2}_{A},
\end{equation}
hence,
\begin{equation}\label{2}
0=\sqrt{-1}\La_{\w}(\bar{\pa}_{A}F^{1,1}_{A0})+\frac{1}{n}\sqrt{-1}\La_{\w}\bar{\pa}_{A}(\hat{F}_{A}\otimes\w)
+\sqrt{-1}\La_{w}\pa_{A}F^{0,2}_{A}.
\end{equation}
The Yang-Mills connection $d^{\ast}_{A}F_{A}=0$,\ we take $(0,1)$ part,\ it implies that
\begin{equation}\label{3}
\pa^{\ast}_{A}(F^{1,1}_{A0}+\frac{1}{n}\hat{F}_{A}\otimes\w)+\bar{\pa}^{\ast}_{A}F^{0,2}_{A}=0.
\end{equation}
By Hodge identities (see \cite{St} 6.12)
\begin{equation}\nonumber
[\La_{\w},\bar{\pa}_{A}]=-\sqrt{-1}\pa^{\ast}_{A}\quad and\quad [\La_{w},\pa_{A}]=\sqrt{-1}\bar{\pa}^{\ast}_{A},
\end{equation}
we can write (\ref{3}) to
\begin{equation}\label{4}
\begin{split}
0&=\sqrt{-1}[\La_{\w},\bar{\pa}_{A}](F^{1,1}_{A0}+\frac{1}{n}\hat{F}_{A}\otimes\w)-\sqrt{-1}[\La_{w},\pa_{A}]F^{0,2}_{A}\\
&=\sqrt{-1}\La_{\w}(\bar{\pa}_{A}F^{1,1}_{A0})+\frac{1}{n}\sqrt{-1}\La_{\w}\bar{\pa}_{A}(\hat{F}_{A}\otimes\w)
-\sqrt{-1}\bar{\pa}_{A}\hat{F}_{A}-\sqrt{-1}\La_{w}\pa_{A}F^{0,2}_{A}.\\
\end{split}
\end{equation}
By (\ref{2}) and (\ref{4}),\ we obtain
\begin{equation}\nonumber
\begin{split}
0&=-2\sqrt{-1}\La_{\w}\pa_{A}F^{0,2}_{A}-\sqrt{-1}\bar{\pa}_{A}\hat{F}_{A}\\
&=2\bar{\pa}^{\ast}_{A}F^{0,2}_{A}-\sqrt{-1}\bar{\pa}_{A}\hat{F}_{A}.\\
\end{split}
\end{equation}
\end{proof}
A connection is irreducible when it admits no nontrivial covariantly constant Lie algebra-value $0$-form,\ i.e.,
$$\ker\{d_{A}:\Om^{0}(\frak{g}_{P})\rightarrow\Om^{1}(\frak{g}_{P})\}=\{0\}.$$
\begin{lem}\label{23}\label{31}
Let $M$ be a compact K\"{a}hler $n$-fold ($n\geq2$) and $A$ be a irreducible Yang-Mills connection on a bundle $E$ over $M$ with compact,\ semi-simple Lie group $G$.\ There exist constants $C=C(\frak{g},A)$ such that either
$$\hat{F}_{A}=0,$$
or
$$\|\hat{F}_{A}\|_{L^{2}(M)}\leq C\|F^{0,2}_{A}\|^{2}_{L^{4}(M)}.$$
\end{lem}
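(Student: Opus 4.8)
The plan is to combine the identity $2\bar\pa_A^\ast F_A^{0,2}=\sqrt{-1}\,\bar\pa_A\hat F_A$ from Proposition~\ref{1} with a Weitzenb\"ock/Bochner argument for the function $\hat F_A$, viewed as a section of $\mathfrak g_P$. First I would take the $L^2$-inner product of the identity $\sqrt{-1}\bar\pa_A\hat F_A = 2\bar\pa_A^\ast F_A^{0,2}$ with $\bar\pa_A\hat F_A$ and integrate by parts, obtaining $\|\bar\pa_A\hat F_A\|_{L^2}^2 = -2\sqrt{-1}\,\langle \bar\pa_A\bar\pa_A^\ast F_A^{0,2},\, \bar\pa_A\hat F_A\rangle$; more usefully, pair the identity with $\hat F_A$ itself after applying $\bar\pa_A^\ast$ to both sides, or directly estimate $\|\bar\pa_A\hat F_A\|_{L^2}^2 = 2\langle \bar\pa_A^\ast F_A^{0,2}, -\sqrt{-1}\bar\pa_A^\ast\bar\pa_A\hat F_A\rangle$ — the cleanest route is to note $\bar\pa_A^\ast\bar\pa_A\hat F_A$ is, up to the $(0,2)$-piece, half the covariant Laplacian of $\hat F_A$ on a K\"ahler manifold, so that $\|\bar\pa_A\hat F_A\|_{L^2}^2 \lesssim \|F_A^{0,2}\|_{L^4}\,\|\bar\pa_A\hat F_A\|_{L^4}$ after Cauchy--Schwarz and H\"older, since $\bar\pa_A^\ast F_A^{0,2}$ is controlled pointwise by $|\na_A F_A^{0,2}|$... but we want $\hat F_A$ in $L^2$ against $\|F_A^{0,2}\|_{L^4}^2$, so the correct pairing is the one that puts \emph{two} factors of $F_A^{0,2}$ on the right.

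Concretely, I would start from $\langle 2\bar\pa_A^\ast F_A^{0,2}, \sqrt{-1}\bar\pa_A\hat F_A\rangle_{L^2}=\|\bar\pa_A\hat F_A\|_{L^2}^2$ (using the identity twice), rewrite the left side via the Bianchi-type relation $\bar\pa_A F_A^{0,2}=0$ together with the Bochner formula for $\bar\pa_A^\ast F_A^{0,2}$ on the K\"ahler surface so that the curvature term $F_A^{0,2}\#F_A^{0,2}$ appears, and bound the resulting trilinear term $\int_M |F_A^{0,2}|^2|\hat F_A|$ by $\|F_A^{0,2}\|_{L^4}^2\|\hat F_A\|_{L^2}$. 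This yields $\|\bar\pa_A\hat F_A\|_{L^2}^2\leq C\|F_A^{0,2}\|_{L^4}^2\,\|\hat F_A\|_{L^2}$. The next step is to feed in the \emph{positivity/irreducibility} input: on a compact K\"ahler $n$-fold with an irreducible connection, the operator $\bar\pa_A^\ast\bar\pa_A$ on $\Om^0(\mathfrak g_P)$ has trivial kernel (a $\bar\pa_A$-closed $0$-form is $d_A$-closed by type considerations on K\"ahler manifolds, hence zero by irreducibility), so there is a spectral gap $\|\bar\pa_A\hat F_A\|_{L^2}^2\geq \lambda_1\|\hat F_A - \pi(\hat F_A)\|_{L^2}^2$ where $\pi$ is projection onto the kernel — but the kernel is $0$, so simply $\|\bar\pa_A\hat F_A\|_{L^2}^2\geq \lambda_1\|\hat F_A\|_{L^2}^2$ with $\lambda_1=\lambda_1(\mathfrak g,A)>0$. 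Combining, $\lambda_1\|\hat F_A\|_{L^2}^2\leq C\|F_A^{0,2}\|_{L^4}^2\|\hat F_A\|_{L^2}$, and dividing by $\|\hat F_A\|_{L^2}$ (the dichotomy: either this is zero, giving $\hat F_A=0$, or we may divide) gives $\|\hat F_A\|_{L^2}\leq (C/\lambda_1)\|F_A^{0,2}\|_{L^4}^2$, as claimed.

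I expect the main obstacle to be the careful bookkeeping in the first step: converting $\langle\bar\pa_A^\ast F_A^{0,2},\bar\pa_A\hat F_A\rangle$ into an expression in which \emph{two} powers of $F_A^{0,2}$ appear (rather than one power of $F_A^{0,2}$ times one covariant derivative of it, which would only give an $L^4\cdot L^{4/3}$ bound). This requires using the Bianchi identity $\bar\pa_A F_A^{0,2}=0$ to trade a derivative on $\hat F_A$ for the curvature acting algebraically — precisely, integrating by parts so that $\bar\pa_A$ hits $F_A^{0,2}$ and produces, via $[\bar\pa_A,\bar\pa_A]=F_A^{0,2}\wedge\cdot$ type commutators, a zeroth-order curvature term $F_A^{0,2}\# F_A^{0,2}\#\hat F_A$. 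A secondary technical point is justifying the spectral-gap constant $\lambda_1$ depends only on $(\mathfrak g,A)$: this is immediate once one knows $\ker(\bar\pa_A^\ast\bar\pa_A|_{\Om^0})=0$, which in turn follows since on a K\"ahler manifold $\bar\pa_A u = 0$ for $u\in\Om^0(\mathfrak g_P)$ forces $\pa_A u = 0$ (the $(1,0)$ and $(0,1)$ Laplacians on functions agree up to the factor $2$), hence $d_A u = 0$, contradicting irreducibility unless $u = 0$.
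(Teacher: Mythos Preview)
Your approach is essentially the paper's. The paper gets the key estimate more cleanly than your sketch suggests: apply $\bar\pa_A$ to the identity $2\bar\pa_A^\ast F_A^{0,2}=\sqrt{-1}\,\bar\pa_A\hat F_A$ to obtain $2\bar\pa_A\bar\pa_A^\ast F_A^{0,2}=\sqrt{-1}[F_A^{0,2},\hat F_A]$, then pair with $F_A^{0,2}$ to get
\[
\|\bar\pa_A^\ast F_A^{0,2}\|_{L^2}^2=\tfrac12\int_M\big\langle\sqrt{-1}[F_A^{0,2},\hat F_A],F_A^{0,2}\big\rangle\le c\,\|\hat F_A\|_{L^2}\|F_A^{0,2}\|_{L^4}^2,
\]
which is exactly the trilinear term with two copies of $F_A^{0,2}$ you were hunting for --- no Bochner formula is needed. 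The paper then bounds $\|\bar\pa_A\hat F_A\|_{L^2}^2$ and $\|\pa_A\hat F_A\|_{L^2}^2$ separately (the latter via the conjugate identity with $F_A^{2,0}$), adds them, and invokes the $d_A$-Poincar\'e inequality from irreducibility.

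One caution: your justification for $\ker(\bar\pa_A^\ast\bar\pa_A|_{\Om^0(\mathfrak g_P)})=0$ is not correct as written. On bundle-valued $0$-forms the $(1,0)$- and $(0,1)$-Laplacians do \emph{not} agree in general; the identity $2\bar\pa_A^\ast\bar\pa_A=d_A^\ast d_A+\sqrt{-1}[\hat F_A,\cdot]$ (the very Lemma~6.1.7 of Donaldson--Kronheimer cited in the paper) shows there is a curvature correction. The reason your conclusion nevertheless holds is that $\hat F_A$ is a section of the \emph{real} bundle $\mathfrak g_P$, so $\pa_A\hat F_A=\overline{\bar\pa_A\hat F_A}$ and hence $\|d_A\hat F_A\|_{L^2}^2=2\|\bar\pa_A\hat F_A\|_{L^2}^2$; the spectral gap for $d_A$ coming from irreducibility then transfers. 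With that correction your argument goes through and coincides with the paper's.
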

\begin{proof}
From Proposition \ref{1} (\ref{30}),\ we obtain
$$2\bar{\pa}_{A}\bar{\pa}^{\ast}_{A}F^{0,2}_{A}=\sqrt{-1}\bar{\pa}_{A}\bar{\pa}_{A}\hat{F}_{A}=\sqrt{-1}[F^{0,2}_{A},\hat{F}_{A}].$$
Hence
\begin{equation}\label{5}
\|\bar{\pa}^{\ast}_{A}F^{0,2}_{A}\|^{2}_{L^{2}(M)}=\int_{M}\langle\bar{\pa}_{A}\bar{\pa}^{\ast}_{A}F^{0,2}_{A},F^{0,2}_{A}\rangle
=\int_{M}\langle\frac{\sqrt{-1}}{2}[F^{0,2}_{A},\hat{F}_{A}],F^{0,2}_{A}\rangle
\end{equation}
There is a constant $c>0$ depending on the Lie algebra $\mathfrak{g}$ such that $|[X,\bar{X}]|\leq c|X|^{2}$.\ Then
\begin{equation}\label{6}
\begin{split}
&\quad\int_{M}\langle\frac{\sqrt{-1}}{2}[F^{0,2}_{A},\hat{F}_{A}],F^{0,2}_{A}\rangle\\
&=-\int_{M}tr(\frac{\sqrt{-1}}{2}[F^{0,2}_{A},\hat{F}_{A}]\wedge\ast F^{0,2}_{A})=\int_{M}tr(\frac{\sqrt{-1}}{2}\hat{F}_{A}\wedge[F^{0,2}_{A},\ast F^{0,2}_{A}])\\
&=\int_{M}\langle\frac{\sqrt{-1}}{2}\hat{F}_{A},\sum_{ij}[F^{0,2}_{ij},\overline{F^{0,2}_{ij}}]\rangle\\
&\leq c\int_{M}|\hat{F}_{A}||F^{0,2}_{A}|^{2}\leq c\|\hat{F}_{A}\|_{L^{2}(M)}\|F^{0,2}_{A}\|^{2}_{L^{4}(M)}.\\
\end{split}
\end{equation}

here we used the H\"{o}lder inequality $\int_{M}|\hat{F}_{A}||F^{0,2}_{A}|^{2}\leq\|\hat{F}_{A}\|_{L^{2}(M)}\|F^{0,2}_{A}\|^{2}_{L^{4}(M)}$.

Then from (\ref{5}) and (\ref{6}),\ we get,
\begin{equation}\label{7}
\|\bar{\pa}_{A}\hat{F}_{A}\|^{2}_{L^{2}(M)}=4\|\bar{\pa}_{A}^{\ast}F^{0,2}_{A}\|^{2}_{L^{2}(M)}
\leq c\|\hat{F}_{A}\|_{L^{2}(M)}\|F^{0,2}_{A}\|^{2}_{L^{4}(M)}\\
\end{equation}
In the similar way,\ we get
\begin{equation}\label{8}
\|{\pa}_{A}\hat{F}_{A}\|^{2}_{L^{2}(M)}\leq c\|\hat{F}_{A}\|_{L^{2}(M)}\|F^{2,0}_{A}\|^{2}_{L^{4}(M)}.
\end{equation}
Since $A$ is a irreducible connection,\ there exist a positive constant $\la=\la(A)\geq0$ such that
$$\la\|\hat{F}_{A}\|_{L^{2}(M)}\leq\|d_{A}\hat{F}_{A}\|_{L^{2}(M)}=\|\na_{A}\hat{F}_{A}\|_{L^{2}(M)}.$$
From (\ref{7}) and (\ref{8}),\ we get
\begin{equation}\label{9}\nonumber
\begin{split}
\la^{2}\|\hat{F}_{A}\|^{2}_{L^{2}(M)}&\leq\|d_{A}\hat{F}_{A}\|^{2}_{L^{2}(M)}
=\big{(}\|\bar{\pa}_{A}\hat{F}_{A}\|^{2}_{L^{2}(M)}+\|\pa_{A}\hat{F}_{A}\|^{2}_{L^{2}(M)}\big{)}\\
&\leq c\|\hat{F}_{A}\|_{L^{2}(M)}\|F^{0,2}_{A}\|^{2}_{L^{4}(M)}.\\
\end{split}
\end{equation}
Hence,\ we obtain
\begin{equation}\nonumber
\|\hat{F}_{A}\|^{2}_{L^{2}(M)}\leq c\la^{-2}\|\hat{F}_{A}\|_{L^{2}(M)}\|F^{0,2}_{A}\|^{2}_{L^{4}(M)}
\end{equation}
Hence,\ we can choose $C=C(M,A)=c\la^{-2}$.
\end{proof}
\begin{prop}\label{22}
Let $M$ be a compact K\"{a}hler $n$-fold ($n\geq2$) and $A$ be a irreducible Yang-Mills connection on a bundle $E$ over $M$ with compact,\ semi-simple Lie group $G$.\ There exist a positive constant $\de=\de(M,A)$ with the following significance.\ If $\hat{F}_{A}$ satisfies
$$\|\hat{F}_{A}\|_{L^{n}(M)}\leq\de,$$
then
$$\ker\De_{\bar{\pa}_{A}}|_{\Om^{0}(\mathfrak{g}_{P})}=\{0\}.$$
\end{prop}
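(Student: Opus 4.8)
The plan is to show that the only $s\in\Om^{0}(\mathfrak{g}_{P})$ with $\bar{\pa}_{A}s=0$ is $s=0$.\ This suffices, because on $0$-forms $\bar{\pa}^{\ast}_{A}$ vanishes for degree reasons, so $\De_{\bar{\pa}_{A}}|_{\Om^{0}(\mathfrak{g}_{P})}=\bar{\pa}^{\ast}_{A}\bar{\pa}_{A}$, and over the compact $M$ the equation $\De_{\bar{\pa}_{A}}s=0$ is equivalent to $\bar{\pa}_{A}s=0$; any such $s$ is smooth by elliptic regularity, so the integrations by parts below are legitimate.\ The engine of the proof is a Bochner--Kodaira identity on $\Om^{0}(\mathfrak{g}_{P})$,
\begin{equation}\nonumber
2\bar{\pa}^{\ast}_{A}\bar{\pa}_{A}s=\na^{\ast}_{A}\na_{A}s-\sqrt{-1}[\hat{F}_{A},s].
\end{equation}
I would obtain it in the spirit of Proposition \ref{1}: applying the Hodge identity $[\La_{\w},\bar{\pa}_{A}]=-\sqrt{-1}\pa^{\ast}_{A}$ to the $(1,0)$-form $\pa_{A}s$ (on which $\La_{\w}$ vanishes) gives $\pa^{\ast}_{A}\pa_{A}s=\sqrt{-1}\La_{\w}\bar{\pa}_{A}\pa_{A}s$, and applying $[\La_{\w},\pa_{A}]=\sqrt{-1}\bar{\pa}^{\ast}_{A}$ to the $(0,1)$-form $\bar{\pa}_{A}s$ gives $\bar{\pa}^{\ast}_{A}\bar{\pa}_{A}s=-\sqrt{-1}\La_{\w}\pa_{A}\bar{\pa}_{A}s$; substituting the $(1,1)$-part of $d^{2}_{A}s=[F_{A},s]$, namely $\bar{\pa}_{A}\pa_{A}s=[F^{1,1}_{A},s]-\pa_{A}\bar{\pa}_{A}s$, into the first of these and using $\La_{\w}[F^{1,1}_{A},s]=[\hat{F}_{A},s]$ yields $\pa^{\ast}_{A}\pa_{A}s-\bar{\pa}^{\ast}_{A}\bar{\pa}_{A}s=\sqrt{-1}[\hat{F}_{A},s]$; combining this with $\pa^{\ast}_{A}\pa_{A}s+\bar{\pa}^{\ast}_{A}\bar{\pa}_{A}s=d^{\ast}_{A}d_{A}s=\na^{\ast}_{A}\na_{A}s$, which holds on $0$-forms since $\pa_{A}s$ and $\bar{\pa}_{A}s$ are pointwise orthogonal and $d_{A}=\na_{A}$ there, gives the identity.

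Granting this, suppose $\bar{\pa}_{A}s=0$.\ Pairing the identity with $s$ and integrating over $M$ gives
\begin{equation}\nonumber
\|\na_{A}s\|^{2}_{L^{2}(M)}=\int_{M}\sqrt{-1}\langle[\hat{F}_{A},s],s\rangle\leq c\int_{M}|\hat{F}_{A}|\,|s|^{2},
\end{equation}
where $c=c(\mathfrak{g})$ is the constant with $|[X,Y]|\leq c|X|\,|Y|$.\ I would bound the right-hand side by H\"{o}lder's inequality with conjugate exponents $n$ and $n/(n-1)$, and then use that $M$ has real dimension $2n$ with $n\geq2$, so the Sobolev embedding $W^{1,2}(M)\hookrightarrow L^{2n/(n-1)}(M)$ holds; combined with Kato's inequality $|\na|s||\leq|\na_{A}s|$ this yields a constant $C_{S}=C_{S}(M)$ with
\begin{equation}\nonumber
\int_{M}|\hat{F}_{A}|\,|s|^{2}\leq\|\hat{F}_{A}\|_{L^{n}(M)}\,\|s\|^{2}_{L^{2n/(n-1)}(M)}\leq C_{S}\,\|\hat{F}_{A}\|_{L^{n}(M)}\big(\|\na_{A}s\|^{2}_{L^{2}(M)}+\|s\|^{2}_{L^{2}(M)}\big).
\end{equation}
Finally, irreducibility of $A$ means $\na^{\ast}_{A}\na_{A}$ is a non-negative self-adjoint elliptic operator on $\Om^{0}(\mathfrak{g}_{P})$ with trivial kernel, so its lowest eigenvalue $\mu=\mu(M,A)$ is positive and $\|s\|^{2}_{L^{2}(M)}\leq\mu^{-1}\|\na_{A}s\|^{2}_{L^{2}(M)}$ (the same Poincar\'{e}-type bound used in Lemma \ref{23}).\ Putting the three displays together,
\begin{equation}\nonumber
\|\na_{A}s\|^{2}_{L^{2}(M)}\leq cC_{S}(1+\mu^{-1})\,\|\hat{F}_{A}\|_{L^{n}(M)}\,\|\na_{A}s\|^{2}_{L^{2}(M)},
\end{equation}
so choosing $\de=\de(M,A)<\big(cC_{S}(1+\mu^{-1})\big)^{-1}$, the hypothesis $\|\hat{F}_{A}\|_{L^{n}(M)}\leq\de$ forces $\na_{A}s=0$, hence $s=0$ by irreducibility.\ This gives $\ker\De_{\bar{\pa}_{A}}|_{\Om^{0}(\mathfrak{g}_{P})}=\{0\}$.

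The one genuinely delicate point is the Bochner--Kodaira identity: one must keep track of signs and constants and, crucially, check that only the contraction $\hat{F}_{A}$ (and not the full curvature $F_{A}$) enters the zeroth-order term --- this is precisely what lets the smallness of $\|\hat{F}_{A}\|_{L^{n}}$ be exploited.\ The remaining ingredients --- the pointwise Lie-bracket bound, H\"{o}lder's inequality, Kato's inequality, the Sobolev embedding, and the first-eigenvalue (Poincar\'{e}) inequality furnished by irreducibility --- are all routine, as is the elliptic regularity.
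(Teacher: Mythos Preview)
Your proof is correct and follows essentially the same route as the paper's: both rest on the identity $2\bar{\pa}^{\ast}_{A}\bar{\pa}_{A}s=d^{\ast}_{A}d_{A}s\pm\sqrt{-1}[\hat{F}_{A},s]$ on $\Om^{0}(\mathfrak{g}_{P})$, followed by H\"{o}lder with exponents $n$ and $n/(n-1)$, the Sobolev embedding $L^{2}_{1}\hookrightarrow L^{2n/(n-1)}$, Kato's inequality, and the Poincar\'{e} inequality coming from irreducibility. The only differences are cosmetic: the paper simply cites the identity from \cite{DK} Lemma~6.1.7 (with the opposite sign on the bracket term --- immaterial, since both arguments bound it in absolute value), whereas you derive it from the K\"{a}hler identities; and the paper writes the eigenvalue bound as $\la\|s\|_{L^{2}}\leq\|d_{A}s\|_{L^{2}}$ rather than your $\mu$.
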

\begin{proof}
By \cite{DK} Lemma $6.1.7$,\ for any connection $A$,\ we have
$$2\bar{\pa}^{\ast}_{A}\bar{\pa}_{A}s=d^{\ast}_{A}d_{A}s+\sqrt{-1}[\hat{F}_{A},s],$$
where $s\in\Om^{0}(\mathfrak{g}_{P})$.\ For any section $s\in\ker{\De_{\bar{\pa}_{A}}}$,\ we have
$$d^{\ast}_{A}d_{A}s=-\sqrt{-1}[\hat{F}_{A},s],$$
hence,
\begin{equation}\label{28}
\begin{split}
\|d_{A}s\|^{2}_{L^{2}(M)}&=-\int_{M}\langle\sqrt{-1}[\hat{F}_{A},s],s\rangle\\
&\leq\|\hat{F}_{A}\|_{L^{n}(M)}\|s\|^{2}_{L^{\frac{2n}{n-1}}(M)}\leq C_{S}\|\hat{F}_{A}\|_{L^{n}(M)}\|s\|^{2}_{L^{2}_{1}(M)}\\
\end{split}
\end{equation}
The last inequality,\ we used the Sobolev embedding $L^{2}_{1}(M)\hookrightarrow L^{\frac{2n}{n-1}}(M)$ with embedding constant $C_{S}$,\ here $\dim(M)=2n$ .

Since $A$ is a irreducible connection,\ then there exist a positive constant $\la=\la(A)$ such that
\begin{equation}\label{29}
\la\|s\|_{L^{2}(M)}\leq\|d_{A}s\|_{L^{2}(M)}=\|\na_{A}s\|_{L^{2}(M)}.
\end{equation}
By Kato inequality,\ $\big{|}\na|s|\big{|}\leq|\na_{A}s|$ and (\ref{28}),\ (\ref{29}),
hence
\begin{equation}\nonumber
\|d_{A}s\|^{2}_{L^{2}(M)}\leq C_{S}\|\hat{F}_{A}\|_{L^{n}(M)}(1+\la^{-2})\|d_{A}s\|^{2}_{L^{2}(M)}
\end{equation}
We can choose $\de=\frac{1}{2C_{S}(1+\la^{-2})}$,\ then $d_{A}s\equiv0$.\ Since $A$ is a irreducible connection,\ we obtain $s\equiv0$.
\end{proof}

\section{Yang-Mills connection over a K\"{a}hler surface}

We set $\De_{\bar{\pa}_{A}}=\bar{\pa}_{A}\bar{\pa}^{\ast}_{A}+\bar{\pa}_{A}^{\ast}\bar{\pa}_{A}$,\ $\De_{\pa_{A}}={\pa}_{A}{\pa}^{\ast}_{A}+{\pa}_{A}^{\ast}{\pa}_{A}$,\ and $\De_{A}=d^{\ast}_{A}d_{A}+d_{A}d^{\ast}_{A}.$
\begin{prop}(Weitzenb\"{o}ck formula)
Let $M$ be a complete K\"{a}hler surface with Riemannian metric $g$ and $A$ be a connection on a bundle $E$ over $M$.\ Then for each $\phi\in\Om^{0,2}(\mathfrak{g}^{\C}_{P})$,
\begin{equation}\label{12}
\De_{\bar{\pa}_{A}}\phi=\na^{\ast}_{A}\na_{A}\phi+\sqrt{-1}[\hat{F}_{A},\phi]+2S\phi
\end{equation}
where $S$ is the scalar curvature of the metric $g$.
\end{prop}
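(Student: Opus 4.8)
The plan is to read \eqref{12} as the Bochner--Kodaira--Weitzenb\"ock identity for $(0,2)$-forms with values in $(\mathfrak{g}_{P}^{\C},A)$ over a K\"ahler surface and to derive it from the K\"ahler identities for the unitary connection $A$ together with the second Bianchi identity. The essential simplification is that $M$ is four real dimensional: for $\phi\in\Om^{0,2}(\mathfrak{g}_{P}^{\C})$ there are no nonzero $(0,3)$-forms, so $\bar{\pa}_{A}\phi=0$, whence $\De_{\bar{\pa}_{A}}\phi=\bar{\pa}_{A}\bar{\pa}_{A}^{\ast}\phi$; moreover $\La_{\w}\phi=0$ since $\La_{\w}$ lowers the holomorphic degree.

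I would then apply the K\"ahler identities $\bar{\pa}_{A}^{\ast}=-\sqrt{-1}[\La_{\w},\pa_{A}]$ and $\pa_{A}^{\ast}=\sqrt{-1}[\La_{\w},\bar{\pa}_{A}]$, which involve only the principal symbols and hence hold verbatim even though $\bar{\pa}_{A}^{2}=F^{0,2}_{A}\wedge(\cdot)\neq 0$, exactly as they were used in the proof of Proposition \ref{1}. With $\La_{\w}\phi=0$ this gives $\bar{\pa}_{A}^{\ast}\phi=-\sqrt{-1}\,\La_{\w}\pa_{A}\phi$, hence $\De_{\bar{\pa}_{A}}\phi=-\sqrt{-1}\,\bar{\pa}_{A}\La_{\w}\pa_{A}\phi$. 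Next I would commute the full covariant derivative $\na_{A}=\pa_{A}+\bar{\pa}_{A}$ through $\La_{\w}$ and reorganize, using the structure equations $\pa_{A}^{2}=F^{2,0}_{A}\wedge(\cdot)$, $\bar{\pa}_{A}^{2}=F^{0,2}_{A}\wedge(\cdot)$, $\pa_{A}\bar{\pa}_{A}+\bar{\pa}_{A}\pa_{A}=F^{1,1}_{A}\wedge(\cdot)$ and the K\"ahler commutation relations between $\La_{\w}$ and the Levi-Civita connection. After these commutations the second-order part of the operator assembles into $\na_{A}^{\ast}\na_{A}\phi$, and the remainder is a zeroth-order operator applied to $\phi$. (Alternatively one may invoke the general Bochner--Kodaira--Weitzenb\"ock formula on a K\"ahler manifold and specialize to bidegree $(0,2)$.)

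It remains to identify that zeroth-order operator. By naturality it is a universal linear combination of a contraction of the Riemann tensor of $(M,g)$ acting on $\La^{0,2}T^{\ast}M$ and a contraction of $F_{A}$ acting on $\mathfrak{g}_{P}^{\C}$ by the Lie bracket. Over a surface $\La^{0,2}T^{\ast}M\cong\overline{K_{M}}$ is a line bundle and the trace with $\w$ of its curvature is a multiple of the scalar curvature $S$, so the Riemann contribution is a universal multiple of $S\cdot\mathrm{id}$. For the curvature of $A$, decompose $F_{A}=F^{2,0}_{A}+F^{1,1}_{A0}+\tfrac{1}{2}\hat{F}_{A}\otimes\w+F^{0,2}_{A}$: since the contraction that appears preserves the bidegree $(0,2)$, only the trace part $\hat{F}_{A}\otimes\w$ contributes (the remaining pieces change the $(p,q)$-type or are annihilated), leaving a universal multiple of $\sqrt{-1}[\hat{F}_{A},\phi]$. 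Fixing the two universal constants --- for instance by evaluating both sides at a point in K\"ahler normal coordinates, or by specializing to a flat bundle, where \eqref{12} reduces to the classical Bochner formula on $(0,2)$-forms --- yields exactly $2S\phi$ and $\sqrt{-1}[\hat{F}_{A},\phi]$. The step I expect to be the main obstacle is this final bookkeeping: carrying the factors $\sqrt{-1}$ and the index combinatorics through the commutations so that precisely the coefficients of \eqref{12} emerge, and in particular verifying that the $F^{1,1}_{A0}$ contribution cancels.
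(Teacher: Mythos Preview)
Your approach via the K\"ahler identities and the Bochner--Kodaira framework is a valid route, but it differs from the paper's. The paper proceeds by a direct local-coordinate computation: it writes out $(\bar{\pa}_A^{\ast}\bar{\pa}_A\phi)_{\bar{\mu}\bar{\nu}}$ and $(\bar{\pa}_A\bar{\pa}_A^{\ast}\phi)_{\bar{\mu}\bar{\nu}}$ explicitly, sums them, commutes $[\na_{A,\bar{\mu}},\na_{A,\sigma}]$ to produce the bundle-curvature brackets $[F_{A,\bar{\mu}\sigma},\,\cdot\,]$ and the Ricci terms $R_{\bar{\mu}}^{\bar{\gamma}}\phi_{\bar{\gamma}\bar{\nu}}$, and then reads off the coefficients directly from $\hat{F}_A=\sqrt{-1}\,g^{\tau\bar{\sigma}}F_{A,\bar{\sigma}\tau}$ and $S=\tfrac{1}{2}g^{\bar{\sigma}\tau}R_{\tau\bar{\sigma}}$. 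No K\"ahler identities enter, and the constants $2$ and $1$ in front of $S\phi$ and $\sqrt{-1}[\hat{F}_A,\phi]$ fall out of the index contraction rather than being fixed a posteriori. Your route is more conceptual --- it explains structurally why only $\hat{F}_A$ and $S$ can appear --- but its cost is precisely what you flag as the ``main obstacle'': the argument stops at ``a universal multiple'' and defers the two constants to a separate check, whereas in the paper's approach that bookkeeping \emph{is} the entire proof and is short. One caution: when you write $\na_A=\pa_A+\bar{\pa}_A$ and say the second-order part ``assembles into $\na_A^{\ast}\na_A$'', bear in mind that $\na_A^{\ast}\na_A$ in \eqref{12} is the rough Laplacian built from the full covariant derivative on sections, not a combination of the Dolbeault Laplacians; commuting $\La_{\w}$ through $\pa_A,\bar{\pa}_A$ alone will not produce it without also invoking the K\"ahler relation between $\De_A$ and $\De_{\bar{\pa}_A}$ together with the ordinary Weitzenb\"ock formula for $\De_A$.
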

\begin{proof}
The component of $\bar{\pa}_{A}\phi$ is given by $\na_{A,\bar{\mu}}\phi_{\bar{\nu}\bar{\la}}+\na_{A,\bar{\nu}}\phi_{\bar{\la}\bar{\mu}}{\color{red}+}\na_{A,\bar{\la}}\phi_{\bar{\mu}\bar{\nu}}$,\ where $\na_{A,\bar{\mu}}\cdot=\na_{\bar{\mu}}\cdot+[A_{\bar{\mu}},\cdot]$ ($\na_{A,\mu}\cdot=\na_{\mu}\cdot+[A_{\mu},\cdot]$).\ Then $\bar{\pa}^{\ast}_{A}\bar{\pa}_{A}\phi$ reduces to
$$-\sum g^{\sigma\bar{\tau}}\na_{A,\sigma}
(\na_{A,\bar{\tau}}\phi_{\bar{\mu}\bar{\nu}}+\na_{A,\bar{\mu}}\phi_{\bar{\nu}\bar{\tau}}+\na_{A,\bar{\nu}}\phi_{\bar{\tau}\bar{\mu}}).$$
We have similarly
$$\bar{\pa}_{A}\bar{\pa}_{A}^{\ast}\phi
=-\sum g^{\sigma\bar{\tau}}(\na_{A,\bar{\mu}}\na_{A,\sigma}\phi_{\bar{\tau}\bar{\nu}}-\na_{A,\bar{\nu}}\na_{A,\sigma}\phi_{\bar{\tau}\bar{\mu}}).$$
Then
\begin{equation}\nonumber
\begin{split}
(\De_{\bar{\pa}_{A}}\phi)_{\bar{\mu}\bar{\nu}}
&=-\sum g^{\sigma\bar{\tau}}\na_{A,\sigma}\na_{A,\bar{\tau}}\phi_{\bar{\mu}\bar{\nu}}
-\sum g^{\sigma\bar{\tau}}[\na_{A,\bar{\mu}},\na_{A,\sigma}]\phi_{\bar{\tau}\bar{\nu}}\\
&\quad+\sum g^{\sigma\bar{\tau}}[\na_{A,\bar{\nu}},\na_{A,\sigma}]\phi_{\bar{\tau}\bar{\mu}}\\
&=-\sum g^{\sigma\bar{\tau}}\na_{A,\sigma}\na_{A,\bar{\tau}}\phi_{\bar{\mu}\bar{\nu}}
-\sum g^{\sigma\bar{\tau}}[F_{A,\bar{\mu}\sigma},\phi_{\bar{\tau}\bar{\nu}}]\\
&\quad+\sum g^{\sigma\bar{\tau}}[F_{A,\bar{\nu}\sigma},\phi_{\bar{\tau}\bar{\mu}}]
+\sum(R_{\bar{\mu}}^{\bar{\gamma}}\phi_{\bar{\gamma}\bar{\nu}}-R_{\bar{\nu}}^{\bar{\gamma}}\phi_{\bar{\gamma}\bar{\nu}}).\\
\end{split}
\end{equation}
Since the base manifold is K\"{a}hler surface $S=\frac{1}{2}\sum g^{\bar{\sigma}\tau}R_{\tau\bar{\sigma}}$ and
$\hat{F}_{A}=\sqrt{-1}\sum  g^{\tau\bar{\sigma}}F_{A,\bar{\sigma}\tau}$.\ Thus (\ref{12}) is obtained.
\end{proof}
From Proposition \ref{1} equation (\ref{30}),\ we have $$\De_{\bar{\pa}_{A}}F^{0,2}_{A}=-\frac{1}{2}[\sqrt{-1}\hat{F}_{A},F^{0,2}_{A}],$$
then we obtain
\begin{prop}\label{14}
Let $M$ be a complete K\"{a}hler surface with Riemannian metric $g$ and $A$ be a Yang-Mills connection on a bundle $E$ over $M$.\ Then we have
\begin{equation}\label{13}
\na_{A}^{\ast}\na_{A}F^{0,2}_{A}+\frac{3}{2}[\sqrt{-1}\hat{F}_{A},F^{0,2}_{A}]+2SF^{0,2}_{A}=0
\end{equation}
\end{prop}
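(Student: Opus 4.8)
The plan is to combine the Weitzenb\"ock formula (\ref{12}) with the Bochner-type identity for $\De_{\bar{\pa}_{A}}$ acting on $F^{0,2}_{A}$ that is recorded immediately above the statement. First I would observe that $\bar{\pa}_{A}F^{0,2}_{A}=0$: this is the $(0,3)$-part of the Bianchi identity $d_{A}F_{A}=0$ (automatic on a complex surface, and in any case among the Bianchi relations listed in Section 2), so that $\De_{\bar{\pa}_{A}}F^{0,2}_{A}=\bar{\pa}_{A}\bar{\pa}^{\ast}_{A}F^{0,2}_{A}$.

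Next I would substitute the identity $2\bar{\pa}^{\ast}_{A}F^{0,2}_{A}=\sqrt{-1}\,\bar{\pa}_{A}\hat{F}_{A}$ from Proposition \ref{1}\,(\ref{30}) and use $\bar{\pa}_{A}\bar{\pa}_{A}s=[F^{0,2}_{A},s]$ for $s\in\Om^{0}(\mathfrak{g}_{P})$ (the $(0,2)$-part of $d_{A}^{2}=[F_{A},\,\cdot\,]$, exactly as in the proof of Lemma \ref{23}) to obtain
$$\De_{\bar{\pa}_{A}}F^{0,2}_{A}=\frac{\sqrt{-1}}{2}\,\bar{\pa}_{A}\bar{\pa}_{A}\hat{F}_{A}=\frac{\sqrt{-1}}{2}[F^{0,2}_{A},\hat{F}_{A}]=-\frac{1}{2}[\sqrt{-1}\hat{F}_{A},F^{0,2}_{A}],$$
which is the displayed identity preceding the statement.

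Finally I would apply the Weitzenb\"ock formula (\ref{12}) with $\phi=F^{0,2}_{A}$, namely $\De_{\bar{\pa}_{A}}F^{0,2}_{A}=\na^{\ast}_{A}\na_{A}F^{0,2}_{A}+\sqrt{-1}[\hat{F}_{A},F^{0,2}_{A}]+2SF^{0,2}_{A}$, and equate the two expressions for $\De_{\bar{\pa}_{A}}F^{0,2}_{A}$. Bringing everything to one side yields $\na^{\ast}_{A}\na_{A}F^{0,2}_{A}+\frac{3}{2}[\sqrt{-1}\hat{F}_{A},F^{0,2}_{A}]+2SF^{0,2}_{A}=0$, as claimed.

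There is essentially no serious obstacle here: all the analytic content sits in the Weitzenb\"ock identity and in Proposition \ref{1}, both already established. The only points demanding care are bookkeeping ones --- the factor $\tfrac{1}{2}$ produced by the $\sqrt{-1}$ in Proposition \ref{1}, the sign in $[F^{0,2}_{A},\hat{F}_{A}]=-[\hat{F}_{A},F^{0,2}_{A}]$, and the coefficient $\tfrac{3}{2}=1+\tfrac{1}{2}$ arising when the two curvature terms are collected --- so the argument reduces to a short chain of substitutions.
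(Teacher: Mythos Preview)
Your proposal is correct and follows exactly the paper's approach: the paper derives $\De_{\bar{\pa}_{A}}F^{0,2}_{A}=-\tfrac{1}{2}[\sqrt{-1}\hat{F}_{A},F^{0,2}_{A}]$ from Proposition~\ref{1}\,(\ref{30}) in the line immediately preceding the statement, and then equates this with the Weitzenb\"ock formula~(\ref{12}). Your write-up is simply a more explicit version of the same chain of substitutions, with the bookkeeping (Bianchi, the $\bar{\pa}_{A}^{2}$ identity, and the $\tfrac{3}{2}=1+\tfrac{1}{2}$ collection) spelled out.
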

From the identity,
$$\ast F_{A}=F^{+}_{A}-F^{-}_{A}=F^{0,2}_{A}+\frac{1}{2}\hat{F}_{A}\otimes\w-F^{1,1}_{A_{0}}+F^{0,2}_{A}.$$
We can write Yang-Mills functional as
\begin{equation}\nonumber
\begin{split}
YM(A)&=4\|F^{0,2}_{A}\|^{2}+\|\hat{F}_{A}\|^{2}+(2C_{2}(E)-C_{1}(E)^{2})\\
&=4\|F^{0,2}_{A}\|^{2}+\|\hat{F}_{A}-\la Id_{E}\|^{2}+(2C_{2}(E)-C_{1}(E)^{2})+\frac{2(C_{1}(E)\cdot[\w])^{2}}{rank(E)[\w]^{2}},\\
\end{split}
\end{equation}
where $\la=\frac{2(C_{1}(E)\cdot[\w])}{rank(E)[\w]^{2}}.$
The energy functional $\|\hat{F}_{A}\|^{2}$ plays an important role in the study of Hermitian-Einstein connections (See \cite{Do} and \cite{UY} ).\ Recall that a connection on a holomorphic vector bundle on a K\"{a}hler manifold is called Hermitian-Einstein if $\hat{F}_{A}=\la Id$.
\begin{prop}(\cite{Do} Proposition 3)\label{15}
Let $A$ be an integrable Yang-Mills connection on an Hermitian vector bundle $E$ over a K\"{a}hler $n$-fold $M$.\ Then $A$ is a direct sum of Hermitian-Einstein connections.\ Further more,\ we can denote $A=\oplus_{i=1}^{l}A_{i}$ where $E=\oplus_{i=1}^{l}E_{i}$ is an orthogonal splitting of $E$,\ and where $\hat{F}_{A_{i}}=\la_{i} Id_{E_{i}}$.
\end{prop}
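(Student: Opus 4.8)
The plan is to use integrability together with the Yang-Mills equation to show that $\hat{F}_{A}$ is covariantly constant, and then to split $E$ along the eigenbundles of $\hat{F}_{A}$. Integrability means $F^{0,2}_{A}=0$, and since $A$ is a metric connection its curvature is skew-Hermitian, so $F^{2,0}_{A}$ vanishes as well. Feeding $F^{0,2}_{A}=0$ into Proposition \ref{1}, identity (\ref{30}), gives $0=2\bar{\pa}^{\ast}_{A}F^{0,2}_{A}=\sqrt{-1}\,\bar{\pa}_{A}\hat{F}_{A}$, hence $\bar{\pa}_{A}\hat{F}_{A}=0$; likewise $F^{2,0}_{A}=0$ and the second identity of Proposition \ref{1} give $\pa_{A}\hat{F}_{A}=0$. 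Therefore $d_{A}\hat{F}_{A}=\pa_{A}\hat{F}_{A}+\bar{\pa}_{A}\hat{F}_{A}=0$, and because $\hat{F}_{A}$ is a $0$-form this is exactly $\na_{A}\hat{F}_{A}=0$: the section $\hat{F}_{A}\in\Omega^{0}(\mathrm{End}(E))$ is parallel.

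Next I would run the spectral decomposition of the parallel endomorphism $\hat{F}_{A}$, which is diagonalizable since $\sqrt{-1}\hat{F}_{A}$ is self-adjoint. Parallel transport is an isometry commuting with $\hat{F}_{A}$, so its eigenvalues are locally constant, hence constant on the connected manifold $M$; call the distinct ones $\la_{1},\dots,\la_{l}$. The eigenspaces assemble into smooth subbundles $E_{i}\subset E$ of constant rank, pairwise orthogonal by self-adjointness and preserved by $\na_{A}$, since $(\hat{F}_{A}-\la_{i}\mathrm{Id})s=0$ implies $(\hat{F}_{A}-\la_{i}\mathrm{Id})\na_{A}s=0$. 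This yields an orthogonal, $\na_{A}$-parallel splitting $E=\bigoplus_{i=1}^{l}E_{i}$, hence $A=\bigoplus_{i=1}^{l}A_{i}$ with $A_{i}:=A|_{E_{i}}$. From the block-diagonal structure $F_{A}=\bigoplus_{i}F_{A_{i}}$, so $\hat{F}_{A_{i}}=\hat{F}_{A}|_{E_{i}}=\la_{i}\mathrm{Id}_{E_{i}}$; each $A_{i}$ is still unitary, still integrable ($F^{0,2}_{A_{i}}=F^{0,2}_{A}|_{E_{i}}=0$), and still Yang-Mills (applying $d^{\ast}_{A}=\bigoplus_{i}d^{\ast}_{A_{i}}$ to $F_{A}=\bigoplus_i F_{A_i}$), hence Hermitian-Einstein.

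The only real content is the first step, where the Yang-Mills equation and integrability combine, through Proposition \ref{1}, to kill $\bar{\pa}_{A}\hat{F}_{A}$ and $\pa_{A}\hat{F}_{A}$ simultaneously; everything after that is the elementary linear algebra of a parallel self-adjoint endomorphism. The points to be careful about are minor: that the eigenbundles have locally constant rank, which is exactly what parallelism buys, and that each $A_{i}$ is genuinely a unitary connection compatible with the holomorphic structure $\bar{\pa}_{A_{i}}$ induced on $E_{i}$, which holds because the splitting is both orthogonal and $\bar{\pa}_{A}$-invariant.
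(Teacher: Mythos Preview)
The paper does not supply its own proof of this proposition; it is quoted verbatim from Donaldson \cite{Do}, Proposition~3. Your argument is correct and is essentially Donaldson's: integrability together with unitarity kills $F^{0,2}_{A}$ and $F^{2,0}_{A}$, the Yang-Mills equation (via the identities of Proposition~\ref{1}) then forces $d_{A}\hat{F}_{A}=0$, and the eigenbundle decomposition of the parallel Hermitian endomorphism $\sqrt{-1}\,\hat{F}_{A}$ yields the orthogonal $\nabla_{A}$-invariant splitting.
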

Under condition of Proposition \ref{15},\ we have
\begin{equation}\nonumber
\|\hat{F}_{A}-\la Id_{E}\|_{L^{2}(M)}=\big{(}\sum_{i=1}^{l}rank(E_{i})|\la_{i}-\la|^{2}Vol(M)\big{)}^{\frac{1}{2}}.
\end{equation}
\begin{cor}\label{16}
Let $A$ be an integrable Yang-Mills connection on an Hermitian vector bundle $E$ over a K\"{a}hler $n$-fold $M$.\ Then there exist a constant $\de=\de(M,E,A)$ such that
$$\|\hat{F}_{A}-\la Id_{E}\|_{L^{2}(M)}\geq\de,$$
or
$$\hat{F}_{A}=\la Id_{E}$$
\end{cor}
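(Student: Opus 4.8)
The plan is to deduce the statement directly from the splitting in Proposition \ref{15} together with the norm identity displayed just above it. First I would apply Proposition \ref{15} to write $A=\oplus_{i=1}^{l}A_{i}$ with respect to an orthogonal holomorphic decomposition $E=\oplus_{i=1}^{l}E_{i}$, with $\hat{F}_{A_{i}}=\la_{i}Id_{E_{i}}$ for certain constants $\la_{i}$. These constants are rigid: taking the trace of $F_{A_{i}}$, wedging with $\w^{n-1}$ and integrating over $M$ shows that $\la_{i}$ is a fixed multiple (depending only on $M$) of the slope $\deg(E_{i})/rank(E_{i})$, and similarly $\la$ corresponds to $\deg(E)/rank(E)$. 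In particular, once $A$ is fixed — hence the decomposition and the holomorphic bundles $E_{i}$ — the numbers $\la_{1},\dots,\la_{l}$ and $\la$ are all determined.

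Next I would use the identity
$$\|\hat{F}_{A}-\la Id_{E}\|_{L^{2}(M)}=\Big(\sum_{i=1}^{l}rank(E_{i})|\la_{i}-\la|^{2}Vol(M)\Big)^{1/2},$$
which holds because $\hat{F}_{A}-\la Id_{E}=\oplus_{i}(\la_{i}-\la)Id_{E_{i}}$ and $\sum_{i}rank(E_{i})=rank(E)$. If $\la_{i}=\la$ for every $i$, the right-hand side vanishes, so $\hat{F}_{A}=\la Id_{E}$ and the second alternative holds. Otherwise there is an index $j$ with $\la_{j}\neq\la$, and the right-hand side is bounded below by $\big(rank(E_{j})|\la_{j}-\la|^{2}Vol(M)\big)^{1/2}>0$; one then takes $\de=\de(M,E,A)$ to be this lower bound (or, just as well, the value $\|\hat{F}_{A}-\la Id_{E}\|_{L^{2}(M)}$ itself), and the first alternative holds. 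This finishes the argument.

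I do not anticipate a genuine obstacle: the content is entirely carried by Proposition \ref{15} and the elementary observation that, for a fixed integrable Yang-Mills connection, the eigenvalue-type constants $\la_{i}$ are invariants of the holomorphic splitting, so $\|\hat{F}_{A}-\la Id_{E}\|_{L^{2}(M)}$ attains a single definite value. The only point worth flagging is that $\de$ really does depend on $A$ through the $\la_{i}$; to obtain a $\de$ depending only on $(M,E)$ one would want the set of slopes of holomorphic sub-bundles of $E$ to be discrete near $\deg(E)/rank(E)$ — which holds, for instance, when $[\w]$ is a rational class — but this refinement is not needed for the statement as formulated.
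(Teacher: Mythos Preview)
Your argument is correct and matches the paper's proof essentially line for line: both invoke Proposition~\ref{15} to obtain the splitting with constants $\la_{i}$, use the displayed norm identity, and in the case $\la_{j}\neq\la$ for some $j$ take $\de=\big(rank(E_{j})|\la_{j}-\la|^{2}Vol(M)\big)^{1/2}$. Your remarks on the rigidity of the $\la_{i}$ via slopes and on the possible dependence of $\de$ on $A$ are additional commentary not present in the paper, but they do not alter the approach.
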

\begin{proof}
If we suppose the Yang-Mills connection $A$ is not a Hermitian-Yang-Mills connection.\ Then from Proposition \ref{15},\ the orthogonal splitting,\  $\oplus_{i=1}^{l}E_{i}$,\ of $E$ such that there exist $\la_{i}\neq\la$.\ Hence
$$\|\hat{F}_{A}-\la Id_{E}\|_{L^{2}(M)}\geq\big{(}rank(E_{i})|\la_{i}-\la|^{2}vol(M)\big{)}^{\frac{1}{2}}.$$
then we can choose $\de=(rank(E_{i})|\la_{i}-\la|^{2}vol(M))^{\frac{1}{2}}$.
\end{proof}

Next,\ we gives an isolation theorem relative to $L^{2}$-norm of $\hat{F}_{A}$ on a compact K\"{a}hler surface with positive scalar curvature as follow.
\begin{thm}
Let $M$ be a compact K\"{a}hler surface with positive scalar curvature and $A$ be a Yang-Mills connection on a bundle $E$ over $M$ with compact,\ semi-simple Lie group $G$.\ There exist a positive constant $\de=\de(M,E,A)$ with the following significance.\ If $\hat{F}_{A}$ satasfies
$$\|\hat{F}_{A}-\la Id_{E}\|_{L^{2}(M)}\leq\de,$$
where $\la=\frac{2(C_{1}(E)\cdot[w])}{rank(E)[w]^{2}}$,\ then
$$F^{0,2}_{A}=0\quad and \quad \hat{F}_{A}=\la Id_{E}$$
\end{thm}
\begin{proof}
From Proposition \ref{14},\ we have
\begin{equation}\nonumber
\na_{A}^{\ast}\na_{A}F^{0,2}_{A}+\frac{3\sqrt{-1}}{2}[\hat{F}_{A}-\la Id,F^{0,2}_{A}]+2SF^{0,2}_{A}=0
\end{equation}
and hence
\begin{equation}\label{17}
\int_{M}\langle\na_{A}^{\ast}\na_{A}F^{0,2}_{A},F^{0,2}_{A}\rangle+2S\int_{M}\langle F^{0,2}_{A},F^{0,2}_{A}\rangle
=-\int_{M}\langle\frac{3\sqrt{-1}}{2}[\hat{F}_{A}-\la Id,F^{0,2}_{A}],F^{0,2}_{A}\rangle
\end{equation}
By Kato inequality,\ $\big{|}\na|F^{0,2}_{A}|\big{|}\leq |\na_{A}F^{0,2}_{A}|$,\ we estimate left hand of (\ref{17})
\begin{equation}\label{18}
\int_{M}\langle\na_{A}^{\ast}\na_{A}F^{0,2}_{A},F^{0,2}_{A}\rangle+2S\int_{M}\langle F^{0,2}_{A},F^{0,2}_{A}\rangle\geq C\|F^{0,2}_{A}\|^{2}_{L^{2}_{1}(X)},
\end{equation}
where $C=\min\{1,2S\}$.\ Next,\ we estimate right hand of (\ref{17})
\begin{equation}\label{19}
|\int_{M}\langle\frac{3\sqrt{-1}}{2}[\hat{F}_{A}-\la Id,F^{0,2}_{A}],F^{0,2}_{A}\rangle|\leq\|\hat{F}_{A}-\la Id_{E}\|_{L^{2}(X)}\|F^{0,2}_{A}\|_{L^{4}(X)}
\end{equation}
Then from (\ref{17})--(\ref{19}),\ we get
\begin{equation}\nonumber
\begin{split}
C\|F^{0,2}_{A}\|^{2}_{L^{2}_{1}(X)}&\leq\|\hat{F}_{A}-\la Id_{E}\|_{L^{2}(X)}\|F^{0,2}_{A}\|_{L^{4}(X)}\\
&\leq C_{S}\de\|F^{0,2}_{A}\|_{L^{2}_{1}(X)}.
\end{split}
\end{equation}
where $C_{S}$ is Sobolev constant.\ We choose $\de=\frac{C}{2C_{S}}$,\ then $F^{0,2}_{A}\equiv0.$\ From Corollary \ref{16},\ we also obtain $\hat{F}_{A}=\la Id_{E}$.
\end{proof}
From Proposition \ref{14},\ for each $\phi\in\ker{\De_{\bar{\pa}_{A}}}|_{\Om^{0,2}(\mathfrak{g}^{\C}_{P})}$,\ we have
$$\na_{A}^{\ast}\na_{A}\phi+\sqrt{-1}[\hat{F}_{A}-\la Id_{E},\phi]+2S\phi=0.$$
As above,\ we have
\begin{cor}
Let $M$ be a compact K\"{a}hler surface with positive scalar curvature and $A$ be a Yang-Mills connection on a bundle $E$ over $M$ with compact,\ semi-simple Lie group $G$.\  There exist a positive constant $\de=\de(M)$ with the following significance.\ If $\hat{F}_{A}$ satisfies
$$\|\hat{F}_{A}-\la Id_{E}\|_{L^{2}(M)}\leq\de,$$
where $\la=\frac{2(C_{1}(E)\cdot[w])}{rank(E)[w]^{2}}$,\ then
$$\ker\De_{\bar{\pa}_{A}}|_{\Om^{0,2}(\mathfrak{g}^{\C}_{P})}=\{0\}.$$
\end{cor}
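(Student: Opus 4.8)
The plan is to mimic exactly the proof of the preceding theorem, replacing the ASD tensor $F^{0,2}_A$ by an arbitrary $\phi\in\ker\De_{\bar\pa_A}|_{\Om^{0,2}(\mathfrak g^{\C}_P)}$. The starting point is the Weitzenb\"ock-type identity already derived just above the statement, namely that any such $\phi$ satisfies
\begin{equation}\nonumber
\na_{A}^{\ast}\na_{A}\phi+\sqrt{-1}[\hat{F}_{A}-\la Id_{E},\phi]+2S\phi=0,
\end{equation}
which follows from Proposition \ref{14} (the $\la Id_E$ term may be inserted for free since it is central and so brackets to zero). I would pair this with $\phi$ in $L^2$ and integrate over $M$, obtaining
$$\int_{M}\langle\na_{A}^{\ast}\na_{A}\phi,\phi\rangle+2S\int_{M}|\phi|^{2}=-\int_{M}\langle\sqrt{-1}[\hat{F}_{A}-\la Id_{E},\phi],\phi\rangle.$$

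Next I would estimate the two sides exactly as in the theorem's proof. For the left side, integration by parts gives $\|\na_A\phi\|_{L^2}^2+2S\|\phi\|_{L^2}^2$, and the Kato inequality $|\na|\phi||\le|\na_A\phi|$ together with positivity of $S$ yields a lower bound $C\|\phi\|^2_{L^2_1(M)}$ with $C=\min\{1,2S\}>0$ (using compactness of $M$ to get a uniform positive lower bound on $S$). For the right side, the pointwise bound $|[\hat F_A-\la Id_E,\phi]|\le c\,|\hat F_A-\la Id_E|\,|\phi|$ followed by H\"older with exponents $(2,4,4)$ and then the Sobolev embedding $L^2_1(M)\hookrightarrow L^4(M)$ on the $4$-manifold $M$ gives an upper bound $c\,C_S\,\|\hat F_A-\la Id_E\|_{L^2(M)}\,\|\phi\|^2_{L^2_1(M)}\le c\,C_S\,\de\,\|\phi\|^2_{L^2_1(M)}$.

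Combining, one gets $C\|\phi\|^2_{L^2_1(M)}\le c\,C_S\,\de\,\|\phi\|^2_{L^2_1(M)}$, so choosing $\de=\de(M)$ small enough (e.g. $\de<C/(2cC_S)$, which depends only on the geometry of $M$ through $S$ and $C_S$, and on $\mathfrak g$ through $c$) forces $\phi\equiv0$. This proves $\ker\De_{\bar\pa_A}|_{\Om^{0,2}(\mathfrak g^{\C}_P)}=\{0\}$. There is no serious obstacle here; the only point requiring a little care is making sure the threshold $\de$ genuinely depends only on $M$ (and the fixed Lie-algebra constant $c$) and not on $A$ — this is fine because the left-hand coercivity constant $C=\min\{1,2S\}$ and the Sobolev constant $C_S$ are determined by $(M,g)$ alone, and the bracket constant $c$ is universal for $\mathfrak g$; in contrast to Lemma \ref{23} no irreducibility or spectral-gap hypothesis on $A$ is needed, which is why the statement can assert $\de=\de(M)$.
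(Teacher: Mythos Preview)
Your proposal is correct and follows precisely the paper's own approach: the paper states the Weitzenb\"ock identity for $\phi\in\ker\De_{\bar\pa_A}$ and then simply writes ``As above, we have'' to indicate that the estimates from the proof of the preceding theorem carry over verbatim with $F^{0,2}_A$ replaced by $\phi$. Your write-up is in fact more carefully stated than the paper's (you keep track of the square on $\|\phi\|_{L^4}$ and make the dependence of $\de$ on $M$ explicit), but the method is identical.
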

In the case that the scalar curvature $S=0$,\ we consider irreducible Yang-Mills connections on compact Calabi-Yau $2$-folds.

Let $M$ be a compact simply-connected Calabi-Yau $2$-fold,\ with K\"{a}hler form $\w$ and nonzero covariant constant $(2,0)$-form $\theta$ (\cite{GHJ} Definition 4.3 and \cite{Hu} Corollary 4.B.23),\ here $\theta\wedge\bar{\theta}=\frac{\w^{2}}{2}$.\ The form $\theta$ give us a Hodge star $$\ast_{\theta}:\Om^{0,2}(\mathfrak{g}_{P})\rightarrow\Om^{0}(\mathfrak{g}_{P})$$
defined by $\ast_{\theta}\cdot=\ast(\cdot\wedge\theta)$

Let $A$ be a connection on a $G$-bundle $E$ over $M$.\ We can define a section $s\in\Om^{0}(\mathfrak{g}^{\C}_{P})$,\ such that
\begin{equation}\label{20}
\ast(s\wedge\theta)=F^{0,2}_{A},
\end{equation}
hence,\ we have
\begin{equation}\label{25}
-\ast(\bar{\pa}_{A}s\wedge\theta)=\bar{\pa}^{\ast}_{A}F^{0,2}_{A}.
\end{equation}
From (\ref{20}) and (\ref{25}),\ in a direct calculate,\ we have
$$|s|=|F^{0,2}_{A}|\quad and\quad |\bar{\pa}_{A}s|=|\bar{\pa}^{\ast}_{A}F^{0,2}_{A}|.$$
Next,\ we also give an isolation theorem relative to $L^{2}$-norm of $\hat{F}_{A}$ on compact Calabi-Yau $2$-folds.
\begin{thm}
Let $M$ be a compact simply-connected Calabi-Yau $2$-fold and $A$ be a irreducible Yang-Mills connection on a bundle $E$ over $M$ with compact,\ semi-simple Lie group $G$.\ There exist a positive constant $\de=\de(M,A)$ with the following significance.\ If $\hat{F}_{A}$ satisfies
$$\|\hat{F}_{A}\|_{L^{2}(M)}\leq\de,$$
then $A$ is anti-self-dual.
\end{thm}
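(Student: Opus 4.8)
\medskip
\noindent\textbf{Outline of the proof.}
The plan is to run a Bochner--Weitzenb\"ock argument for $F^{0,2}_A$, exploiting that the scalar curvature vanishes on a Calabi--Yau surface, and then to transfer the resulting differential inequality to the $0$-form $s$ attached to $F^{0,2}_A$ by the parallel $(2,0)$-form $\theta$, so that the spectral gap coming from the irreducibility of $A$ (which a priori only controls sections, i.e.\ $0$-forms) can be brought to bear.

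First I would put $S=0$ into Proposition~\ref{14}: a Yang--Mills connection on $M$ satisfies
\[
\na_A^\ast\na_A F^{0,2}_A+\tfrac32[\sqrt{-1}\hat F_A,F^{0,2}_A]=0 .
\]
Pairing with $F^{0,2}_A$, integrating over the compact manifold $M$, estimating the commutator term as in the proof of Lemma~\ref{23} (so that $|\langle[\sqrt{-1}\hat F_A,F^{0,2}_A],F^{0,2}_A\rangle|\le c\,|\hat F_A|\,|F^{0,2}_A|^2$ with $c=c(\mathfrak g)$), and applying H\"older's inequality, I get
\[
\|\na_A F^{0,2}_A\|_{L^2(M)}^2\le c\,\|\hat F_A\|_{L^2(M)}\,\|F^{0,2}_A\|_{L^4(M)}^2 .
\]
Then the Kato inequality $\big|\na|F^{0,2}_A|\big|\le|\na_A F^{0,2}_A|$ together with the Sobolev embedding $L^2_1(M)\hookrightarrow L^4(M)$ ($\dim M=4$) gives $\|F^{0,2}_A\|_{L^4(M)}^2\le C_S\big(\|F^{0,2}_A\|_{L^2(M)}^2+\|\na_A F^{0,2}_A\|_{L^2(M)}^2\big)$ with $C_S=C_S(M)$.

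Next I would transfer this to a $0$-form. Let $s\in\Om^0(\mathfrak g^{\C}_P)$ be the section with $\ast(s\wedge\theta)=F^{0,2}_A$ as in $(\ref{20})$. Since $\theta$ is covariantly constant of constant pointwise norm and the Hodge star is parallel, the same computation that yields $|s|=|F^{0,2}_A|$ and $|\bar\pa_A s|=|\bar\pa^\ast_A F^{0,2}_A|$ equally yields $|\na_A s|=|\na_A F^{0,2}_A|$ pointwise. Because $A$ is irreducible, $(\ref{29})$ provides a constant $\la=\la(A)>0$ with $\la\|t\|_{L^2(M)}\le\|\na_A t\|_{L^2(M)}$ for every $t\in\Om^0(\mathfrak g^{\C}_P)$ (apply it to the real and imaginary parts of $t$); taking $t=s$,
\[
\|F^{0,2}_A\|_{L^2(M)}=\|s\|_{L^2(M)}\le\la^{-1}\|\na_A s\|_{L^2(M)}=\la^{-1}\|\na_A F^{0,2}_A\|_{L^2(M)} .
\]
Feeding this into the two previous displays gives
\[
\|\na_A F^{0,2}_A\|_{L^2(M)}^2\le c\,C_S\,(1+\la^{-2})\,\|\hat F_A\|_{L^2(M)}\,\|\na_A F^{0,2}_A\|_{L^2(M)}^2 ,
\]
so with $\de=\de(M,A):=\big(2c\,C_S\,(1+\la^{-2})\big)^{-1}$ the hypothesis $\|\hat F_A\|_{L^2(M)}\le\de$ forces $\na_A F^{0,2}_A\equiv0$, hence $\na_A s\equiv0$, hence $d_As\equiv0$, and irreducibility gives $s\equiv0$, i.e.\ $F^{0,2}_A\equiv0$. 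Finally, with $F^{0,2}_A=0$, Proposition~\ref{1} gives $\bar\pa_A\hat F_A=0$ and, after conjugation (recall $\hat F_A$ is $\mathfrak g_P$-valued, so $\overline{\hat F_A}=\hat F_A$) or equivalently via $F^{2,0}_A=\overline{F^{0,2}_A}=0$ in the second identity of Proposition~\ref{1}, also $\pa_A\hat F_A=0$; thus $d_A\hat F_A=0$, irreducibility gives $\hat F_A=0$, and therefore $F^+_A=F^{2,0}_A+F^{0,2}_A+\tfrac12\hat F_A\otimes\w=0$, i.e.\ $A$ is anti-self-dual.

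The step I expect to be the crux is the transfer from the $(0,2)$-form $F^{0,2}_A$ to the $0$-form $s$: one must check that $\ast_\theta$ is a pointwise isometry intertwining $\na_A$, so that $|\na_A s|=|\na_A F^{0,2}_A|$, and it is precisely here that the Calabi--Yau hypothesis enters, through the nowhere-vanishing parallel $(2,0)$-form $\theta$ available because $M$ is a simply-connected Calabi--Yau $2$-fold; without such a $\theta$ there would be no mechanism to convert the gap $\la(A)$ for $d_A$ on $0$-forms into control of $F^{0,2}_A$. The rest --- the pointwise commutator bound and the bookkeeping of the constants $c(\mathfrak g)$, $C_S(M)$, $\la(A)$ so that $\de$ depends only on $M$ and $A$ --- is routine.
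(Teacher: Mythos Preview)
Your argument is correct and reaches the same conclusion, but it is organized differently from the paper's. The paper does not use the Weitzenb\"ock formula (Proposition~\ref{14}) at all in the Calabi--Yau case; instead it works directly with the $0$-form $s$ from the start, applying the K\"ahler identity $2\bar\pa_A^\ast\bar\pa_A s=d_A^\ast d_A s+\sqrt{-1}[\hat F_A,s]$ on $\Om^0(\mathfrak g_P^\C)$, and then uses only the relation $|\bar\pa_A s|=|\bar\pa_A^\ast F^{0,2}_A|$ together with the estimate $\|\bar\pa_A^\ast F^{0,2}_A\|_{L^2}^2\le c\,\|\hat F_A\|_{L^2}\|F^{0,2}_A\|_{L^4}^2$ already obtained in Lemma~\ref{23}. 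This bounds $\|d_A s\|_{L^2}^2$ by the same right-hand side you get, after which Kato, Sobolev, and the gap $\la(A)$ close the loop exactly as in your version. So the paper controls $\|d_A s\|$ via the $\bar\pa$-Laplacian on $0$-forms, whereas you control $\|\na_A F^{0,2}_A\|$ via the Bochner formula on $(0,2)$-forms and then transfer. Your route needs the extra (true) fact that $\ast_\theta$ intertwines the full covariant derivative, i.e.\ $|\na_A s|=|\na_A F^{0,2}_A|$; the paper only needs the weaker $|\bar\pa_A s|=|\bar\pa_A^\ast F^{0,2}_A|$. On the other hand, your write-up explicitly finishes the argument that $\hat F_A=0$ (via Proposition~\ref{1} and irreducibility), which the paper's proof leaves implicit after reaching $s\equiv0$.
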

\begin{proof}
By \cite{DK} Lemma $6.1.7$,\ for any connection $A$,\ we have
$$2\bar{\pa}^{\ast}_{A}\bar{\pa}_{A}s=d^{\ast}_{A}d_{A}s+\sqrt{-1}[\hat{F}_{A},s],$$
where $s\in\Om^{0}(\mathfrak{g}^{\C}_{P})$ satisfies (\ref{20}).\ Hence,\ we have
\begin{equation}\label{26}
\|d_{A}s\|^{2}_{L^{2}(M)}=2\|\bar{\pa}_{A}s\|^{2}_{L^{2}(M)}-\int_{M}\langle\sqrt{-1}[\hat{F}_{A},s],s\rangle.
\end{equation}
We estimate the right hand of (\ref{26}),
\begin{equation}\label{27}
\begin{split}
&\quad2\|\bar{\pa}_{A}s\|^{2}_{L^{2}(M)}-\int_{M}\langle\sqrt{-1}[\hat{F}_{A},s],s\rangle\\
&=2\|\bar{\pa}^{\ast}_{A}F^{0,2}_{A}\|^{2}_{L^{2}(M)}-\int_{M}\langle\sqrt{-1}[\hat{F}_{A},s],s\rangle\\
&=2\int_{M}\langle\bar{\pa}_{A}\bar{\pa}^{\ast}_{A}F^{0,2}_{A},F^{0,2}_{A}\rangle-\int_{M}\langle\sqrt{-1}[\hat{F}_{A},s],s\rangle\\
&=\int_{M}\langle\sqrt{-1}\bar{\pa}_{A}\bar{\pa}_{A}\hat{F}_{A},F^{0,2}_{A}\rangle-\int_{M}\langle\sqrt{-1}[\hat{F}_{A},s],s\rangle\\
&\leq 2\|\hat{F}_{A}\|_{L^{2}(M)}\|F^{0,2}_{A}\|^{2}_{L^{4}(M)}+2\|\hat{F}_{A}\|^{2}_{L^{2}(M)}\|s\|^{2}_{L^{4}(M)}\\
\end{split}
\end{equation}
By Kato inequality,\ $\big{|}\na|s|\big{|}\leq|\na_{A}s|=|d_{A}s|$,\ we have
\begin{equation}
\|F^{0,2}_{A}\|^{2}_{L^{4}(M)}=\|s\|^{2}_{L^{4}(M)}\leq C_{S}\|s\|^{2}_{L^{2}_{1}(M)}
\leq C_{S}\big{(}\|s\|^{2}_{L^{2}(M)}+\|d_{A}s\|^{2}_{L^{2}(M)}\big{)}.
\end{equation}
Since $A$ is a irreducible connection,\ then there exist a positive constant $\la=\la(A)$ such that
\begin{equation}\label{32}
\la\|s\|_{L^{2}(M)}\leq\|d_{A}s\|_{L^{2}(M)}.
\end{equation}
From (\ref{27})--(\ref{32}),\ we obtain
$$\|d_{A}s\|^{2}_{L^{2}(M)}\leq4C_{S}\|\hat{F}_{A}\|_{L^{2}(M)}(1+\la^{-2})\|d_{A}s\|^{2}_{L^{2}(M)}.$$
We choose $\de=\frac{1}{8C_{S}(1+\la^{-2})}$,\ then $d_{A}s\equiv0$.\ Since $A$ is irreducible,\ then $s\equiv0$.
\end{proof}
From Proposition \ref{31},\ we have
\begin{cor}
Let $M$ be a compact simply-connected Calabi-Yau $2$-fold and $A$ be a irreducible Yang-Mills connection on a bundle $E$ over $M$ with compact,\ semi-simple Lie group $G$.\ There exist a positive constant $\de=\de(M,A)$ with the following significance.\ If $F^{0,2}_{A}$ satisfies
$$\|F^{0,2}_{A}\|_{L^{4}(M)}\leq\de,$$
then $A$ is anti-self-dual connection.
\end{cor}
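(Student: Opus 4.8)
The plan is to deduce this corollary from the $L^{2}$-isolation theorem for $\hat{F}_{A}$ on compact simply-connected Calabi--Yau $2$-folds proved just above, by using Lemma \ref{31} to trade a smallness hypothesis on $\|F^{0,2}_{A}\|_{L^{4}(M)}$ for a smallness hypothesis on $\|\hat{F}_{A}\|_{L^{2}(M)}$. In other words, the two quantitative statements already established chain together: control of $F^{0,2}_{A}$ in $L^{4}$ controls $\hat{F}_{A}$ in $L^{2}$, which forces $A$ to be anti-self-dual.

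First I would apply Lemma \ref{31}. Since $M$ is a compact K\"{a}hler surface ($n=2$) and $A$ is an irreducible Yang--Mills connection on $E$, there is a constant $C=C(\mathfrak{g},A)$ such that either $\hat{F}_{A}=0$ or $\|\hat{F}_{A}\|_{L^{2}(M)}\leq C\|F^{0,2}_{A}\|^{2}_{L^{4}(M)}$. In the first alternative the inequality holds trivially, so in both cases one has the single estimate
$$\|\hat{F}_{A}\|_{L^{2}(M)}\leq C\|F^{0,2}_{A}\|^{2}_{L^{4}(M)},$$
which is all that is needed; the dichotomy in the lemma collapses to one bound. Here $C$ is a genuine finite positive constant precisely because irreducibility of $A$ gives a positive lower bound $\lambda=\lambda(A)>0$ for the relevant reducibility operator, so that $C=c\lambda^{-2}<\infty$.

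Next, let $\de_{1}=\de_{1}(M,A)>0$ be the constant furnished by the preceding Calabi--Yau isolation theorem, so that $\|\hat{F}_{A}\|_{L^{2}(M)}\leq\de_{1}$ implies $A$ is anti-self-dual. I would then set $\de=\de(M,A):=\sqrt{\de_{1}/C}$ (or any smaller positive number). If $\|F^{0,2}_{A}\|_{L^{4}(M)}\leq\de$, the estimate above yields
$$\|\hat{F}_{A}\|_{L^{2}(M)}\leq C\|F^{0,2}_{A}\|^{2}_{L^{4}(M)}\leq C\de^{2}=\de_{1},$$
and hence the Calabi--Yau isolation theorem applies and $A$ is anti-self-dual, which is the assertion. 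Note that $\de$ depends only on $M$ and $A$, through $C$ and $\de_{1}$.

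I do not expect a real obstacle here, since both ingredients are already in hand; the only points requiring a little care are the order in which the constants are chosen ($C$ from Lemma \ref{31}, then $\de_{1}$ from the theorem, then $\de$) and the observation that the ``either/or'' in Lemma \ref{31} is subsumed by the uniform inequality used above. As an alternative one could avoid invoking Lemma \ref{31} and instead re-run the argument behind the theorem directly, starting from the Weitzenb\"{o}ck-type identity for the section $s$ with $\ast(s\wedge\theta)=F^{0,2}_{A}$ and estimating every term by $\|F^{0,2}_{A}\|_{L^{4}(M)}=\|s\|_{L^{4}(M)}$ rather than by $\|\hat{F}_{A}\|_{L^{2}(M)}$; but chaining the two already-proved results is shorter and cleaner.
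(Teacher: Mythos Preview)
Your proposal is correct and follows exactly the route the paper takes: the paper's proof is the single line ``From Proposition \ref{31}, we have'' (where \ref{31} is Lemma~2.2), meaning one chains the bound $\|\hat{F}_{A}\|_{L^{2}}\leq C\|F^{0,2}_{A}\|_{L^{4}}^{2}$ from that lemma with the immediately preceding Calabi--Yau isolation theorem for $\hat{F}_{A}$. Your write-up simply makes this chaining explicit and handles the dichotomy in the lemma correctly.
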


\subsection*{Acknowledgment}
I would like to thank my supervisor Professor Sen Hu for suggesting me to consider this problem,\ and for providing numerous ideas during the course of
stimulating exchanges.\ This work is partially supported by Wu Wen-Tsun Key Laboratory of Mathematics of Chinese Academy of Sciences at USTC.
\bigskip
\footnotesize

\end{document}